\documentclass[11pt]{amsart}

\usepackage[pagebackref=true,colorlinks,linkcolor=blue,citecolor=magenta]{hyperref}
\usepackage{amsmath,amssymb,latexsym} % ams auxiliaries.
\usepackage{graphicx}
\usepackage{fullpage}
\usepackage[square, numbers]{natbib}   % Arguably a nicer bibliography.
\usepackage{comment}
\usepackage{bm}

\newtheorem{theorem}{Theorem} % If subsection, get 3.1.2, etc.

\theoremstyle{definition}
\newtheorem{definition}{Definition}

\newtheorem{question}{Question}

\title{On The Number of Irreducible FAT Colorings}
\author{Saeed Shaebani}
\address{S. Shaebani, 
	School of Mathematics and Computer Science,
	Damghan University, Damghan, Iran.}
\email{shaebani@du.ac.ir}

\begin{document}
\begin{abstract}
\noindent 
A vertex coloring of a graph $G$ with nonempty color classes $V_1,V_2,\dots,V_k$ is called a \emph{FAT $k$-coloring} if there exist real numbers $\alpha,\beta\in[0,1]$ such that for every vertex $v$ and every color class $V_i \in \left\{ V_1,V_2,\dots,V_k \right\} $ we have
$$
\bigl| N(v) \cap V_i \bigr|=
\begin{cases}
	\alpha\deg(v) & \text{if } v\notin V_i,\\[4pt]
	\beta\deg(v)  & \text{if } v\in V_i .
\end{cases}
$$

\noindent
The FAT coloring concept was originally proposed and thoroughly studied by Beers and Mulas.
The set of all FAT colorings of a graph is naturally ordered by the coarsening relation, in which finer partitions are larger in the order. The maximal elements of this poset, called \emph{irreducible FAT colorings}, form a generating set: every FAT coloring of the graph can be obtained by merging color classes of some irreducible one. Beers and Mulas raised the compelling question whether, for every positive integer $s$, there exists a graph that admits exactly $s$ irreducible FAT colorings. In this paper we settle this question affirmatively by exhibiting, for any given $s$, a graph possessing precisely $s$ such colorings.

\noindent {\bf Keywords:}\ {Vertex Degrees, Fair And Tolerant Vertex Coloring, Irreducible FAT Coloring.}\\
{\bf Mathematics Subject Classification: 05C15, 05C07.}
\end{abstract}

\maketitle
\section{{\bf Introduction}}

The present work is concerned with parameters that arise in the context of graph coloring. Unless the brackets serve as a wrapper for a bibliographic label, the symbol $[n]$ (with $n$ a positive integer) shall throughout this article be understood as the initial segment consisting of the first $n$ natural numbers $\{1,2,\dots,n\}$.

\noindent
The scope of this paper is restricted to simple graphs with finite and nonempty vertex sets. Throughout, the notation is consistent with that of the foundational texts \cite{Bollobas1998}, \cite{BondyMurty2008}, \cite{Diestel2017}, and \cite{West2001}.

\noindent
The \emph{order} of a graph $G$ is the cardinality of its vertex set, denoted by $|V(G)|$, and its \emph{size} is the number of edges, written $|E(G)|$. Given a vertex $v$ of $G$, its \emph{neighborhood} $N_G(v)$ is the collection of all vertices adjacent to $v$; members of this collection are called \emph{neighbors} of $v$. The \emph{degree} of $v$, denoted $\deg_G(v)$, is defined as the cardinality $|N_G(v)|$. When the graph $G$ is apparent from the context, we drop the subscript and simply write $N(v)$ and $\deg(v)$ for the neighborhood and the degree, respectively.

\noindent
Take a graph $G$, a vertex $v$, and a subset $S\subseteq V(G)$. We write $e(v,S)$ for the count of vertices in $S$ that are adjacent to $v$. In precise terms,
$$e(v,S) := \bigl| \{ u\in S \colon uv \in E(G) \} \bigr|.$$

\noindent
For any graph $G$ and any subset $S\subseteq V(G)$, the \emph{induced subgraph} on $S$, denoted $G[S]$, is the graph whose vertex set is $S$ and in which two distinct vertices are joined by an edge if and only if they are adjacent in $G$.

\noindent
By a \emph{complete graph} on $n$ vertices, denoted $K_n$, we mean a graph of order $n$ in which every two distinct vertices are adjacent.

\noindent
A \emph{clique} of a graph $G$ is a nonempty subset $S\subseteq V(G)$ whose induced subgraph $G[S]$ is complete, meaning that every two distinct vertices in $S$ are adjacent. The greatest possible size of a clique in $G$ is termed the \emph{clique number} and denoted by $\omega(G)$.

In the context of a graph $G$, a \emph{vertex coloring} is defined to be any function $c  \colon V(G) \to C$ mapping the vertex set $V(G)$ into a set $C$ whose elements are called \emph{colors}; the set $C$ itself is referred to as the \emph{color collection}. When $c(v)=r$, we say that $v$ is \emph{assigned} the color $r$.

\noindent
Given a coloring $c\colon V(G)\to C$ and a color $r\in C$, the fiber $c^{-1}(r)=\{v\in V(G) \colon c(v)=r\}$ is called the \emph{color class} of $r$; it is exactly the collection of vertices assigned the color $r$.

A \emph{partition} of a set $S$ stands for a collection of some nonempty and pairwise disjoint subsets of $S$ whose union is $S$.

Let $G$ be a graph. Any partition of $V(G)$ can be expressed as the collection of nonempty color classes arising from some vertex coloring of $G$. Conversely, the nonempty fibers of a vertex coloring $c\colon V(G)\to C$ form a partition of the vertex set $V(G)$. The structure of this partition is independent of the specific labels chosen for the colors; it simply groups vertices into pairwise disjoint blocks according to the color they receive. We conclude that if our only concern is which vertices receive the same color, and the names of the colors play no role, then all that matters is the partition of the vertex set into color classes, irrespective of the specific labels attached to those classes. Thus, a vertex coloring of a graph $G$ is nothing other than a partition of $V(G)$ into $k$ nonempty blocks $V_1,\ldots,V_k$, each called a color class.

Consider a graph $G$ and a vertex coloring of $G$ with nonempty color classes $V_1,V_2,\dots,V_k$. This coloring is said to be a \emph{Fair And Tolerant $k$-coloring} of $G$ if one can find real numbers $\alpha,\beta\in[0,1]$ with the property that for each vertex $v$ and each color class $V_i$,
$$
e(v,V_i)=
\begin{cases}
	\alpha\deg(v) & \text{if } v\notin V_i,\\[4pt]
	\beta\deg(v)  & \text{if } v\in V_i,
\end{cases}
$$
where, as usual, $e(v,V_i)$ denotes the number of neighbors of $v$ that belong to $V_i$.

\medskip

\noindent
The Fair And Tolerant vertex coloring concept was originally proposed and thoroughly studied by Beers and Mulas in \cite{beers20252, beers2026}. To keep the terminology concise, they coined the abbreviated phrase ``\emph{FAT coloring}'', and also employed the term ``\emph{FAT $k$-coloring}'' to designate a Fair And Tolerant vertex coloring with $k$ colors.

\medskip

Let $G$ be a graph possessing a vertex $v$ of positive degree. If $G$ admits a FAT $k$-coloring with nonempty color classes $V_1,\dots,V_k$ and associated parameters $\alpha,\beta\in[0,1]$, then evaluating the degree of $v$ yields
$$
\deg(v)=\sum_{i=1}^{k} e(v,V_i) = \beta\deg(v)+(k-1)\alpha\deg(v).
$$
Since $\deg(v)>0$, dividing by $\deg(v)$ gives the fundamental identity
$$
\beta+(k-1)\alpha=1,
$$
which links $k$, $\alpha$, and $\beta$ as noted in \cite{beers20252}.

\medskip

Every graph $G$ trivially possesses a FAT $1$-coloring: setting $\alpha=0$, $\beta=1$ and letting $V(G)$ be the unique color class yields $e(v,V(G))=\deg(v)=\beta\deg(v)$ for all $v$, satisfying the definition. Consequently, the set
$$
\mathcal{A}(G)=\{k\in\mathbb{N} \ \colon \ \text{$G$ admits a FAT $k$-coloring}\}
$$
is nonempty. Since in any FAT coloring the color classes are nonempty and partition $V(G)$, the number of classes cannot exceed $|V(G)|$; thus $\mathcal{A}(G)$ is bounded above. Hence $\mathcal{A}(G)$ possesses a greatest element; this integer is termed the \emph{FAT chromatic number} of $G$ and is denoted by $\chi^{\mathrm{FAT}}(G)$. In symbols,
$$
\chi^{\mathrm{FAT}}(G):=\max\{k\in\mathbb{N} \ \colon \ G \text{ admits a FAT } k\text{-coloring}\}.
$$
This notion was originated and investigated by Beers and Mulas~\cite{beers20252, beers2026}.

\maketitle
\section{\bf The Coarsening Order and Irreducible FAT Colorings}

Let $X = \left\{ X_1 , X_2 , \dots , X_m \right\}$ and
$Y = \left\{ Y_1 , Y_2 , \dots , Y_n\right\}$ be two partitions of the same set $V$. We say that $X$ is \emph{finer} than $Y$ (or eaquivaletly, $Y$ is \emph{coarser} than $X$) whenever each element of $Y$ is a union of some
elements in $X$. More precisely, $X$ is said to be finer than $Y$ if for each $Y_j \in \left\{ Y_1 , Y_2 , \dots , Y_n\right\}$ there exist some $X_{i_1} , X_{i_2}, \dots , X_{i_t}$ for which
$Y_j = X_{i_1} \cup X_{i_2} \cup \dots \cup X_{i_t}$. Also, $X$ is called \emph{strictly finer} than $Y$ whenever $X \neq Y$ and the partition $X$ is finer than $Y$. Besides, $Y$ is said to be \emph{strictly coarser} than $X$ whenever the partition $X$ is strictly finer than the partition $Y$.

As stated in \cite{beers20252, beers2026}, a useful feature of FAT colorings is that appropriate merging color classes preserves the FAT conditions; producing a coarser coloring whose parameters scale in a predictable way.

\noindent
Let $V_1, V_2, \dots,V_k$ be the color classes of a FAT $k$-coloring with parameters $\alpha$ and $\beta$. For any positive divisor $\ell$ of $k$ with $\ell \neq 1$, define new color classes $W_1,\dots,W_\ell$ by taking the union of $\frac{k}{\ell}$ of the original color classes each. Now, let us take some vertex $v \in W_j$, and assume that $v \in V_i$ in the original coloring.
For any block $W_h$ with $h \neq j$, the number of neighbors of $v$ in $W_h$ is the sum of its neighbors in the $\frac{k}{\ell}$ constituent classes of the original coloring. Because $v$ has exactly $\alpha \deg(v)$ neighbors in each of those color classes, the total is $\frac{k}{\ell} \alpha \deg(v)$. Hence $\alpha' =  \frac{k}{\ell}\,\alpha$ serves as the fairness parameter for
the coarser coloring \cite{beers20252}. Also, the set of neighbors of $v$ in $W_j$ is the union of its neighbors in the $\frac{k}{\ell}-1$ constituent classes of the original coloring together with its neighbors in
$V_i$. Because $v$ has exactly $\alpha \deg(v)$ neighbors in each of those $\frac{k}{\ell}-1$ classes and it has exactly $\beta \deg(v)$ neighbors in $V_i$, the total is
$\Bigl( \beta + \left(  \frac{k}{\ell}-1 \right) \alpha \Bigr) \deg(v)$. So, $\beta' = \beta + \left( \frac{k}{\ell} - 1 \right) \alpha$ serves as the tolerance parameter for the coarser coloring \cite{beers20252}.
Therefore, the resulting new $\ell$-coloring is also FAT \cite{beers20252} with parameters
	$$\begin{array}{lcccr}
		\alpha' = \frac{k}{\ell}\,\alpha & & {\rm and} & & \beta' = \beta + \left( \frac{k}{\ell} - 1 \right) \alpha .
	\end{array}$$
This interesting property of FAT colorings, motivates the following partial order introduced by Beers and Mulas \cite{beers20252}.

\begin{definition}
	Let $G$ be a graph. Also, let $X = \left\{ X_1 , X_2 , \dots , X_m \right\}$ and
	$Y = \left\{ Y_1 , Y_2 , \dots , Y_n\right\}$ be two partitions of the same set $V(G)$, each establishing some FAT coloring of $G$. We write $X \preceq Y$ whenever $X$ is coarser than $Y$.
	Moreover, $X \prec Y$  indicates that $X \preceq Y$ and $X \neq Y$. The relation $\preceq$, which is called \emph{the coarsening order}, establishes a partial order on the set of all FAT colorings of $G$.
\end{definition}

The unique FAT $1$-coloring in which all vertices lie in one unified class is the minimum element of this poset. The maximal elements are particularly important because every FAT coloring can be obtained by merging classes of some maximal one. Accordingly, the maximal ones deserve a specific title, and they are referred to as \emph{irreducible FAT colorings} \cite{beers20252}.

\begin{definition}
	A FAT coloring is said to be \emph{irreducible} if it is maximal with respect to $\preceq$; that is, it is not strictly coarser than any other FAT coloring. A FAT coloring that is not irreducible is called \emph{reducible}.
\end{definition}

The irreducible FAT colorings form a “generating set” for all FAT colorings of the graph, that is, every FAT coloring can be derived from some irreducible FAT coloring by repeatedly merging color classes.

The complete classification of irreducible FAT colorings constitutes a natural structural problem, and it has been carried out for several families of graphs in the original works~\cite{beers20252, beers2026} by means of combinatorial and spectral techniques.

\section{\bf The Main Result}

This section is devoted to the main result of the paper. In the original work \cite{beers20252}, Beers and Mulas raised the following compelling question.

\begin{question}
	For a given positive integer $s$, does there exist a graph that admits exactly $s$ irreducible FAT colorings?
\end{question}

The central purpose of this paper is to settle the question posed above; the following theorem yields an affirmative answer.

\begin{theorem}
	For any positive integer $s$, there exists a graph possessing precisely	$s$ irreducible FAT colorings.
\end{theorem}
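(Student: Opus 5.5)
The plan is to use an explicit and very simple family: $G_s$ is the complete graph $K_s$ together with one extra isolated vertex $w$. For $s=1$ take $G_1=K_1$. Its only partition is the single class, so it has exactly one FAT coloring, which is then irreducible. From now on assume $s\ge 2$ and write $Q=V(K_s)$. The goal is to show that the irreducible FAT colorings of $G_s$ are exactly the $s$ partitions
$$P_u=\bigl\{\{u,w\}\bigr\}\cup\bigl\{\{x\}\colon x\in Q\setminus\{u\}\bigr\},\qquad u\in Q.$$

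\emph{Step 1: description of all FAT colorings of $G_s$.} The vertex $w$ has degree $0$, so it satisfies the FAT conditions automatically. Thus only vertices $v\in Q$ impose constraints, and each has $\deg(v)=s-1>0$.

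First, every class meets $Q$ whenever $k\ge 2$. Suppose some class $V_j$ met $Q$ in no vertex. Then any $v\in Q$ outside $V_j$ has $e(v,V_j)=0$, so $\alpha=0$. The identity $\beta+(k-1)\alpha=1$ then gives $\beta=1$, so all $s-1$ neighbours of $v$ lie in its own class. Hence that class contains all of $Q$, and every other class contains only $w$, so $k=2$. In that case $w$'s class is the one missing $Q$, and the conditions on $Q$-vertices read $e(v,\{w\})=0=\alpha(s-1)$ and $e(v,Q)=s-1=\beta(s-1)$. Both hold with $\alpha=0,\beta=1$. So the partition $\{Q,\{w\}\}$ is a genuine FAT $2$-coloring; I must not exclude it. Call it the exceptional coloring.

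Otherwise every class meets $Q$, and the usual complete-graph computation applies. A vertex $v\in Q\cap V_i$ sees $|V_j\cap Q|$ vertices in each $V_j$ with $j\ne i$, and $|V_i\cap Q|-1$ vertices in $V_i$. Comparing two vertices of $Q$ in different classes forces all the sets $|V_j\cap Q|$ to have a common size $m$. For $k\ge 3$ this is immediate. For $k=2$ it follows from $|V_2\cap Q|=\alpha(s-1)=|V_1\cap Q|$. Conversely, any partition in which every class meets $Q$ in exactly $m$ vertices, with $w$ placed in an arbitrary class, is FAT with
$$\alpha=\frac{m}{s-1},\qquad \beta=\frac{m-1}{s-1}.$$

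\emph{Step 2: maximal elements.}
\begin{itemize}
\item Each $P_u$ is FAT, by Step 1 with $m=1$.
\item Every non-exceptional FAT coloring is coarser than some $P_u$. Take $u$ to be any $Q$-vertex in the class containing $w$; each class is then a union of blocks of $P_u$.
\item The exceptional coloring $\{Q,\{w\}\}$ is also coarser than every $P_u$ when $s\ge 2$. Indeed $Q$ is the union of $\{u,w\}\setminus\{w\}$ and the singletons in $P_u$, so the real requirement is that $\{w\}$ be a union of blocks of $P_u$, which fails. The honest fix is to check it directly: it is also coarser than the FAT coloring $\bigl\{Q,\{w\}\bigr\}$ refined as $\{\{x\}\colon x\in Q\}\cup\{\{w\}\}$? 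That refinement has a class missing $Q$ and $s\ge2$ classes meeting $Q$, contradicting Step 1, so it is not FAT.
\end{itemize}
Hence the exceptional coloring must be examined separately. It has $k=2$. Any FAT coloring strictly finer than it would keep $\{w\}$ as a class containing no $Q$-vertex while having at least $3$ classes, which Step 1 forbids. So the exceptional coloring is itself maximal.

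This reveals that $G_s$ actually has $s+1$ irreducible FAT colorings: the $P_u$ and the exceptional one. The $P_u$ are pairwise distinct, all have $s$ classes, and no FAT coloring has more than $s$ classes unless it is exceptional. So none is strictly coarser than another and each is maximal. To obtain exactly $s$, I would use $G_{s-1}$ for $s\ge 3$, which gives $(s-1)+1=s$. The small cases need separate handling. For $s=1$ take $K_1$, as above. For $s=2$ take $K_2\cup K_1$, which by this count has $2+1=3$ irreducible colorings, so it does not work. Instead take $K_2$ alone together with the matching observation: two disjoint edges have exactly $2^{2-1}=2$ FAT $2$-colorings, namely the proper $2$-colorings, both maximal. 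So the final construction is $K_{s-1}\cup K_1$ for $s\ge 3$, $2K_2$ for $s=2$, and $K_1$ for $s=1$.

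\emph{Main obstacle.} The main obstacle is the careful bookkeeping of degenerate parameters in Step 1. Degree-$0$ vertices and the case $\alpha=0$ admit extra colorings such as the exceptional one, and these shift the count. I would verify the $\alpha=0$ analysis and the small cases $s\le 3$ by hand.
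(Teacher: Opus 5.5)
\textbf{The gap is the case $s=2$.} Your treatment of $s=1$ via $K_1$ is correct. So is $s\ge 3$ via $K_{s-1}\cup K_1$, which is exactly the paper's construction. Your Step 1 classification, together with the separate maximality check for the exceptional coloring $\{Q,\{w\}\}$, shows that $K_n\cup K_1$ with $n\ge 2$ has exactly $n+1$ irreducible FAT colorings. (Delete the third bullet of Step 2. It asserts that $\{Q,\{w\}\}$ is coarser than every $P_u$, which is false; the argument after it is what you actually need.)

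Your $s=2$ graph $2K_2$, with edges $ab$ and $cd$, has three irreducible FAT colorings, not two.
\begin{itemize}
\item The proper $2$-colorings $\{\{a,c\},\{b,d\}\}$ and $\{\{a,d\},\{b,c\}\}$ are FAT with $\alpha=1$, $\beta=0$.
\item The partition $\{\{a,b\},\{c,d\}\}$ into the two components is also a FAT $2$-coloring, with $\alpha=0$, $\beta=1$: each vertex has its unique neighbour in its own class and none in the other.
\end{itemize}
This third coloring is precisely the degenerate $\alpha=0$ phenomenon you flagged, the analogue of your exceptional coloring. All three are maximal. Every vertex has degree $1$, so a FAT $k$-coloring with $k\ge 3$ would need $\alpha\in\{0,1\}$. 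Then $\alpha=1$ gives $\beta=2-k<0$, while $\alpha=0$ puts each edge inside one class, allowing at most two classes.

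$K_2$ alone does not work either: its partition into singletons is FAT and finest, so it has exactly one irreducible coloring. Since the family $K_n\cup K_1$ realizes only $1,3,4,5,\dots$, you need a genuinely different graph for $s=2$.

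\textbf{How the paper closes it.} The paper uses the cycle $C_6$. For a FAT $k$-coloring with $k\ge 2$, connectivity forces $\alpha>0$, and since every degree is $2$, $2\alpha\in\{1,2\}$. Together with $\beta+(k-1)\alpha=1$, this leaves three possibilities.
\begin{itemize}
\item $k=3$, $\alpha=\tfrac12$, $\beta=0$: any three consecutive vertices lie in distinct classes. This forces the antipodal partition $\{\{v_0,v_3\},\{v_1,v_4\},\{v_2,v_5\}\}$.
\item $k=2$, $\alpha=1$, $\beta=0$: the coloring is the bipartition.
\item $k=2$, $\alpha=\beta=\tfrac12$: impossible. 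The cross edges would form a perfect matching, so both classes have $3$ vertices. The edges inside each class would form a perfect matching of that class, so each class has even size.
\end{itemize}
The two nontrivial FAT colorings are incomparable and each is maximal. Hence $C_6$ has exactly two irreducible FAT colorings, which is what your $s=2$ case needs.
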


\begin{proof}
	The proof is divided into three cases: $s=1$, $s=2$, and $s\geq 3$.

	{\bfseries The Case $\bm{s=1}$)} Let $n$ be a positive integer greater than $1$.
	The set of all singletons $\big\{ \{v\} \ \colon \ v\in V \left( K_n \right) \big\}$
	provides the set of color classes of an irreducible FAT $n$-coloring (with parameters $\alpha = \frac{1}{n-1}$ and $\beta = 0$) for the complete graph on $n$ vertices $K_n$.
	
	\noindent
	Since $\big\{ \{v\} \ \colon \ v\in V \left( K_n \right) \big\}$ is the finest partition of $V \left( K_n \right)$ and it is also strictly finer than every other FAT coloring of $K_n$, we conclude that it is the lonely irreducible FAT coloring of $K_n$.
	Thus, $K_n$ has exactly one irreducible FAT coloring.
	
	{\bfseries The Case $\bm{s=2}$)} Let $C_6$ be the cycle over six vertices $v_0 , v_1 , \dots , v_5$ whose edge set equals
	$\left\{ v_0 v_1 , v_1 v_2 , v_2 v_3 , v_3 v_4 , v_4 v_5 , v_5 v_0 \right\}$.
	First, let us introduce three particular FAT colorings of $C_6$, as follows:
	\begin{itemize}
		\item a FAT $1$-coloring of $C_6$, say $\varphi _1$, whose lonely color class is
		$\varphi_1 ^{(1)} := V(C_6)$, with the corresponding parameters $\alpha _ {\varphi_1} = 0$ and $\beta _{\varphi _1} = 1$.
		\item a FAT $2$-coloring of $C_6$, say $\varphi _2$, whose color classes are
		$\varphi_2 ^{(1)} := \left\{ v_0 , v_2 , v_4 \right\}$ and $\varphi_2 ^{(2)} := \left\{ v_1 , v_3 , v_5 \right\}$, with the corresponding parameters $\alpha _ {\varphi_2} = 1$ and $\beta _{\varphi _2} = 0$.
		\item a FAT $3$-coloring of $C_6$, say $\varphi _3$, whose color classes are
		$\varphi_3 ^{(1)} := \left\{ v_0 , v_3 \right\}$ and $\varphi_3 ^{(2)} := \left\{ v_1 , v_4 \right\}$ and $\varphi_3 ^{(3)} := \left\{ v_2 , v_5 \right\}$. The corresponding parameters of this FAT coloring are $\alpha _ {\varphi_3} = \frac{1}{2}$ and $\beta _{\varphi _3} = 0$.
	\end{itemize}
	
	\noindent
	Now, let us regard an arbitrary FAT $k$-coloring of $C_6$, and denote it by $\mathcal{L}$. Also, let the corresponding parameters of $\mathcal{L}$ be
	$\alpha_{\mathcal{L}}$ and $\beta_{\mathcal{L}}$.
	Moreover, let the set of color classes of $\mathcal{L}$ be
	$\widetilde{\mathcal{L}} := \Big\{ \mathcal{L} ^{(1)} , \mathcal{L} ^{(2)} , \dots , \mathcal{L} ^{(k)}  \Big\}$. Besides, without loss of generality,
	we may assume that $v_0 \in \mathcal{L} ^{(1)}$.
	
	\noindent
	The graph $C_6$ is a $2$-regular and connected graph. Thus, on account of Theorem \cite{}, if some FAT $k$-coloring of $C_6$ exists then we must have
	$k \in \{1,2,3\}$.
	
	\noindent
	If $k=1$, then $\mathcal{L}$ has just one
	color class which is the whole vertex set of $C_6$. Therefore,
	$\widetilde{\mathcal{L}} = \left\{\varphi _1 ^{(1)}\right\}$ and $\mathcal{L}$ coincides with $\varphi_1$.
	
	\noindent
	If $k > 1$, then the connectedness of $C_6$ implies that $\alpha_{\mathcal{L}} > 0$.
	Since $k > 1$ and $\deg _{C_6} \left( v_0 \right) =2$, we must have
	$$2 \alpha_{\mathcal{L}} =   \alpha_{\mathcal{L}}  \times \deg _{C_6} \left( v_0 \right) = e \Big( v_0 , \mathcal{L}^{(2)} \Big) \in \{ 1,2 \} .$$
	Hence, $\alpha_{\mathcal{L}} \in \left\{ \frac{1}{2} , 1 \right\}$.
	Also, the relation
	$\beta_{\mathcal{L}} + (k-1) \alpha_{\mathcal{L}} = 1$
	implies that one of the following three items occurs:
	\vspace{0.4cm}
	\begin{itemize}
		\item $k=3$ and $\alpha_{\mathcal{L}} = \frac{1}{2}$ and $\beta_{\mathcal{L}} = 0$,
		\vspace{0.4cm}
		\item $k=2$ and $\alpha_{\mathcal{L}} = 1$ and $\beta_{\mathcal{L}} = 0$,
		\vspace{0.4cm}
		\item $k=2$ and $\alpha_{\mathcal{L}} = \beta_{\mathcal{L}} = \frac{1}{2}$.
	\end{itemize}
	\vspace{0.4cm}
	For the first item where $k=3$ and $\alpha_{\mathcal{L}} = \frac{1}{2}$ and $\beta_{\mathcal{L}} = 0$, every vertex of $C_6$ has exactly one neighbor in each of the
	color classes not containing that vertex.
	Thus, $\widetilde{\mathcal{L}}$ must be equal to
	$$\Bigl\{ \mathcal{L} ^{(1)} , \mathcal{L} ^{(2)} , \mathcal{L} ^{(3)}  \Bigr\} = \Bigl\{   \left\{ v_0 , v_3 \right\} , \left\{ v_1 , v_4 \right\} , \left\{ v_2 , v_5 \right\}  \Bigr\}.$$
	Hence $\widetilde{\mathcal{L}} = \Bigl\{  \varphi_3 ^{(1)} , \varphi_3 ^{(2)} , \varphi_3 ^{(3)}  \Bigr\}$; which implies that $\mathcal{L}$ coincides with $\varphi_3$.
	
	\vspace{0.4cm}
	\noindent
	For the second item where $k=2$ and $\alpha_{\mathcal{L}} = 1$ and $\beta_{\mathcal{L}} = 0$, the coloring $\mathcal{L}$ has two color classes, and every vertex of $C_6$ has two neighbors in the opposite color class.
	So, it requires that
	$$\widetilde{\mathcal{L}} =  \Bigl\{ \mathcal{L} ^{(1)} , \mathcal{L} ^{(2)} \Bigr\} = \Bigl\{   \left\{ v_0 , v_2 , v_4 \right\} , \left\{ v_1 , v_3 , v_5 \right\}  \Bigr\}.$$
	It follows that $\widetilde{\mathcal{L}} = \Bigl\{  \varphi_2 ^{(1)} , \varphi_2 ^{(2)} \Bigr\}$.
	Accordingly, $\mathcal{L}$ and $\varphi_2$ coincide.
	
	\vspace{0.4cm}
	\noindent
	Finally, it is turn to consider the item where
	$k=2$ and $\alpha_{\mathcal{L}} = \beta_{\mathcal{L}} = \frac{1}{2}$.
	
	\noindent
	For every vertex $v_i \in V \left( C_6 \right)$ and every color class $\mathcal{L} ^{(j)} \in \widetilde{\mathcal{L}} = \Bigl\{ \mathcal{L} ^{(1)} , \mathcal{L} ^{(2)} \Bigr\}$
	where $v_i \in \mathcal{L} ^{(j)}$, the vertex $v_i$ has exactly one neighbor in $\mathcal{L} ^{(j)}$ (because of $\beta_{\mathcal{L}} = \frac{1}{2}$)
	and exactly one neighbor in the opposite color class (because of $\alpha_{\mathcal{L}} = \frac{1}{2}$).
	Therefore, the set of all edges with one end in $\mathcal{L} ^{(1)}$ and the other end in $\mathcal{L} ^{(2)}$, establish a perfect matching in $C_6$.
	Thus, $\Bigl| \mathcal{L} ^{(1)} \Bigr| = \Bigl| \mathcal{L} ^{(2)} \Bigr| = 3$.
	Besides, the set of edges with both ends in $\mathcal{L} ^{(1)}$ establish a matching that uses all vertices of $\mathcal{L} ^{(1)}$.
	Hence, the number of vertices of $\mathcal{L} ^{(1)}$ is even,
	contradicting the fact that  $\Bigl| \mathcal{L} ^{(1)} \Bigr| = 3$.
	Accordingly, the item where $k=2$ and $\alpha_{\mathcal{L}} = \beta_{\mathcal{L}} = \frac{1}{2}$
	does not occur.
	
	\noindent
	It follows that the lonely FAT colorings of $C_6$ are just $\varphi_1$, $\varphi_2$, and $\varphi_3$.
	
	\noindent
	Since $\varphi_2$ and $\varphi_3$ are refinements of $\varphi_1$, one finds that
	$\varphi_1$ is not an irreducible FAT coloring of $C_6$.
	
	\noindent
	Since neither $\varphi_2$ nor $\varphi_3$ is a refinement of the other,
	both are irreducible FAT colorings.
	
	\noindent
	We conclude that $C_6$ has just two irreducible FAT colorings.
	
	{\bfseries The Case $\bm{s \geq 3$})}
	Let $G := K_{s-1} \cup K_1$ be a graph with vertex set
	$$V(G) := \{0,1,2, \dots , s-1\}$$
	and edge set
	$$E(G) := \{ ij \colon \ \ 1 \leq i < j \leq s-1 \}.$$
	Indeed, the graph $G$ is the disjoint union of the complete graph $K_{s-1}$ over vertices $1,2, \dots , s-1$ and the complete graph $K_1$ over the vertex $0$.
	
	\noindent
	We divide the set of all irreducible FAT colorings of $G$ into two parts.	
	The first part consists of all those irreducible FAT colorings for which $\{0\}$
	is a color class; and the second part consists of all those irreducible FAT colorings such that $\{0\}$ is not a color class.
	
	\noindent
	For the first part where $\{0\}$ is a color class, it is necessary to note that the partition of $V(G)$ into two sets $V_1$ and $V_2$ where
	$$\begin{array}{lcccr}
		V_1 := V \left( K_1 \right) = \{0\} &  & {\rm and} &  & V_2 := V \left( K_{s-1} \right) = \{1,2, \dots , s-1\}
	\end{array}$$
	provides a FAT $2$-coloring of $G$ with parameters
	$\alpha = 0$ and $\beta = 1$.
	
	\noindent
	We claim that this FAT coloring is the unique FAT coloring of $G$ for which $\{0\}$ is a color class.
	Suppose contrary to the claim that some FAT coloring of $G$ with color classes
	$W_1 , W_2 , \dots , W_t$
	and corresponding parameters $\alpha_W$ and $\beta_W$ exists in such a way that $W_1 = \{0\}$ and $t\geq 3$.
	One may assume without loss of generality that $1\in W_2$.
	Based on the definition of FAT colorings, since $1 \notin W_2$ and $1 \notin W_3$, we must have
	$$\begin{array}{lcccr}
		e \left( 1, W_1 \right) = \alpha_W \times \deg_G (1)  & & {\rm and} & & e \left( 1, W_3 \right) = \alpha_W \times \deg_G (1).
	\end{array}$$
	Thus, $e \left( 1, W_1 \right) = e \left( 1, W_3 \right) $.	
	But because of $e \left( 1, W_1 \right) = 0$ and $e \left( 1, W_3 \right) = \bigl| W_3 \bigr| $,
	one finds that $W_3 = \varnothing$; contradicting the fact that all color classes in a FAT coloring must be nonempty.
	We conclude that the partition $\left\{ V_1 , V_2 \right\}$ of $V(G)$ where
	$V_1 = V \left( K_1 \right) = \{0\} $ and $V_2 = V \left( K_{s-1} \right) = [s-1]$
	establishes the unique FAT coloring of $G$ subject to the condition that $\{0\}$ is a color class.
	
	\noindent
	Every partition $X= \left\{ X_1 , X_2 , \dots , X_k \right\}$ of $V(G)$ that is strictly finer than $\left\{V_1 , V_2\right\}$,
	must satisfy the condition $\{0\} \in X$; and therefore, it could never be a FAT coloring of $G$ because $\left\{V_1 , V_2\right\}$
	is the unique FAT coloring of $G$ that has $\{0\}$ as a color class.
	
	\noindent
	Accordingly, $\Bigl\{ \{0\} , \{1,2, \dots , s-1\} \Bigr\}$ is an irreducible FAT coloring of $G$, and it is the unique irreducible
	FAT coloring of $G$ subject to the condition that $\{0\}$ is a color class.
	
	\noindent
	For the second part where $\{0\}$ is not a color class, we initially introduce some $(s-1)$-colorings of $G$, say
	$f_1 , f_2 , \dots , f_{s-1}$.
	
	\noindent
	For each $1 \leq j \leq s-1$, the set of color classes of $f_j$ is defined as
	$\widetilde{f}_j = \Bigl\{   V^{(j)}_{1} , V^{(j)}_{2} , \dots , V^{(j)}_{s-1}   \Bigr\}$
	in such a way that
	$$
	V^{(j)}_{i}=
	\begin{cases}
		\{i\}    & \text{if } \ \ i \in [s-1] \setminus \{j\},\\[4pt]
		\{0,j\}  & \text{if } \ \ i=j .
	\end{cases}
	$$

	\vspace{0.2cm}
	
	\noindent
	Indeed, in the coloring $f_j$, the color class $V^{(j)}_{j}$ equals $\{0,j\}$, and all the other color classes are singletons $V^{(j)}_{i} = \{i\}$.
	
	\noindent
	Each of $f_1 , f_2 , \dots , f_{s-1}$ is a FAT $(s-1)$-coloring of $G$ with parameters $\alpha = \frac{1}{s-2}$ and $\beta = 0$.	
	Since the lonely strictly finer partition of
	$\left\{  	V^{(j)}_{1} , V^{(j)}_{2} , \dots , V^{(j)}_{s-1}  \right\}$
	is the collection of all singletons
	$\Bigl\{  \{0\} , \{1\} , \{2\} , \dots , \{s-1\} \Bigr\}$
	which is not a FAT coloring of $G$, we find that all of  $f_1 , f_2 , \dots , f_{s-1}$ are irreducible.
	
	For every other FAT coloring $\mathcal{M}$ of $G$ that has not $\{0\}$ as a color class and its set of color classes (say $\widetilde{\mathcal{M}}$)
	is other than all of $\widetilde{f}_1, \widetilde{f}_2, \dots , \widetilde{f}_{s-1}$, there exists a nonempty subset $W \subseteq \{1,2,\dots, s-1\}$
	such that $\{0\} \cup W$ is a color class of $\mathcal{M}$.
	Also, all other color classes of $\mathcal{M}$ would be some pairwise disjoint subsets of $[s-1] \setminus W$.
	Now, one of the following two conditions occurs:
	\begin{itemize}
		\item $W$ equals to some singleton $\{j\}$,
		\item $W = \left\{ j_1 , j_2 , \dots , j_t \right\}$ for some $t>1$ and some pairwise disjoint elements $j_1 , j_2 , \dots , j_t$.
	\end{itemize}
	In the former, $\{0,j\}$ would be a color class of $\mathcal{M}$.
	
	Now, since $\widetilde{\mathcal{M}} \neq \widetilde{f}_j$, we find that not all the other color classes of $\mathcal{M}$ could be singletons.
	Hence, $\widetilde{f}_j$ is strictly finer than $\widetilde{\mathcal{M}}$.
	
	\noindent
	In the latter, $\left\{ 0 , j_1 , j_2 , \dots , j_t \right\}$ is a color class of $\mathcal{M}$ and it equals
	$\left\{ 0 , j_1 \right\} \cup \left\{  j_2  \right\}  \cup \dots \cup \left\{ j_t \right\}$;
	thus it is a union of $t$ color classes of $f_{j_1}$.
	Since all the other color classes of $\mathcal{M}$ are some subsets of $[s-1] \setminus \left[ j_1 \right]$ and because every color class of $f_{j_1}$
	is either $\left\{  0 , j_1 \right\}$ or a singleton, it follows that each color class of $\mathcal{M}$ other than $\left\{ 0 , j_1 , j_2 , \dots , j_t \right\}$
	is also a union of some color classes of $f_{j_1}$.
	Accordingly, $\widetilde{f}_{j_1}$ is strictly finer than $\widetilde{\mathcal{M}}$.
	
	\noindent
	We conclude that $f_1 , f_2 , \dots , f_{s-1}$ are the lonely irreducible FAT colorings of $G$ subject to the condition that $\{0\}$ is not a color class.
	
	\noindent
	We summarize that, on one hand, $\Bigl\{  \{0\} , \{ 1 , 2 , \dots , s-1 \}  \Bigr\}$ is the unique irreducible FAT coloring of $G$ subject to the condition that $\{0\}$
	is a color class.
	On the other hand, for the case where $\{0\}$ is not a color class, the list of all irreducible FAT colorings is $f_1 , f_2 , \dots , f_{s-1}$.
	Accordingly, the graph $G$ has exactly $s$ irreducible FAT colorings.
\end{proof}

\vspace{0.4cm}

%%%%%%%%%%%%%%%%%%%%%%%%%%%%%%%%%%%%%%%%%%%%%%%%%%%%%%%%%%%%%%%%%%%%%%%%%%%%%%%%%%%%%%%%%%%%%%%%%%%%%%%%%%%%%%%%%%%%%%%%%%%%%%%%%%%%%%%%%%%%%%%%%%%%%%%

\def\cprime{$'$} \def\cprime{$'$}

%%%%%%%%%%%%%%%%%%%%%%%%%%%%%%%%%%%%%%%%%%%
%\def\cprime{$'$} \def\cprime{$'$}

%\end{thebibliography}

%\bibliographystyle{plain}
%\bibliography{MyReferences}
\end{document}